\documentclass[a4paper, 12pt]{amsart}
\usepackage{amsmath,amsthm,amssymb}
\usepackage[T1]{fontenc}
\usepackage[margin=3.1cm]{geometry}
\usepackage[numbered]{bookmark}
\usepackage{changepage} 
\usepackage[utf8]{inputenc}
\usepackage{graphics, graphicx, caption, xcolor}
\usepackage{amsmath, amsfonts, amsthm, amssymb}
\usepackage{mathrsfs, mathtools, mathabx}
\usepackage{verbatim} 
\usepackage{hyperref}
\usepackage{cleveref}
\usepackage{enumerate}
\usepackage{xcolor}
\usepackage{cite}

\newtheorem{theorem}{Theorem}[section]

\newtheorem{lemma}[theorem]{Lemma}

\newtheorem{corollary}[theorem]{Corollary}

\newtheorem{remark}{Remark}

\newcommand{\Z}{\mathbb{Z}} 
\newcommand{\N}{\mathbb{N}}

\newcommand{\R}{\mathbb{R}}

\title{On disjointness of 3-IETs from mixing and lightly mixing systems}

\author[Berk]{Przemysław Berk}
\address{Faculty of mathematics and computer science, Nicolaus Copernicus University, ul. Chopina 12/18, 87-100 Toru\'n, Poland}
\email{zimowy@mat.umk.pl}

\begin{document}
\maketitle
\begin{abstract}
    In this note we show that all interval exchange transformations of three intervals (3-IETs) are spectrally disjoint from all mixing and disjoint from lightly mixing systems.
    Until now, it was known that almost every 3-IET is rigid, which implies this property.
    However, not all 3-IETs are rigid, hence our result is a strict improvement of current knowledge. We then use this result, among many others, to prove that although some 3-IETs are mildly mixing, they are always disjoint in the sense of Furstenberg from lightly mixing systems.
\end{abstract}
\section{Introduction}
Interval exchange transformations (IETs) play an important role in the theory of dynamical systems.
One of the key reasons for interest in them is the fact that they appear as the first return maps for smooth area-preserving flows on surfaces.
From the work of Masur and Veech (\cite{Masur1982} and \cite{Veech1982}), it follows that almost every interval exchange transformation of any number of intervals is rigid and every rigid transformation is spectrally disjoint from every mixing transformation (see Section \ref{sec: spectral}).
However, it is not true that \emph{every} IET is rigid. For example, Yancey in \cite{Yancey2016} showed that self-similar symmetric 3-IETs are never rigid.

This leads to a natural question whether all IETs are spectrally  disjoint from mixing transformations.
It was formally asked by Farhangi, Moreira and Zelada (see Question 1 in \cite{FMZ2026}).
Spectral disjointness from mixing systems implies in particular absence of the Lebesgue component in the spectrum.
It is worth mentioning that usually the question of the universal (holding for every IET) spectral properties is being attributed to Veech, since in \cite{Veech1984} he states, that the only spectral property known to hold for every IET, is that they have no mixing factors.
This is a result by Katok (see \cite{Katok1980}) and, to the best of our knowledge, remains the only known universal spectral property of IETs.

The main goal of this note is to prove the following.
\begin{theorem}\label{thm: main}
    All 3-IETs are spectrally disjoint from every mixing system. In particular, there is no 3-IET which has a Lebesgue component in its spectrum.
\end{theorem}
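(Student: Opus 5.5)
The strategy is to replace rigidity by a weaker but still sufficient property: \emph{partial rigidity} in the weak sense, or more precisely a \emph{weak limit} of the form
\[
U_T^{q_n} \to \int P_s\, d\nu(s) \quad\text{(weak operator topology)}
\]
with a nontrivial ``identity part''. Concretely, I aim to show the following for every 3-IET $T$ (not a rotation, and not periodic; those cases are rigid or trivially handled). There exist a sequence $q_n\to\infty$ and a constant $c>0$ such that every weak limit point of $U_T^{q_n}$ has the form $c\,\mathrm{Id}+(1-c)J$ with $J$ a Markov operator.

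This suffices by the standard argument. Let $S$ be mixing and $\sigma$ a measure with $\sigma\ll\sigma_f$ and $\sigma\ll\sigma_g$, where $f\in L^2_0(T)$ and $g\in L^2_0(S)$. Then $\hat\sigma(q_n)\to 0$ by mixing (Riemann--Lebesgue along the mixing spectral type). On the other hand, passing to the limit in $\hat\sigma_{h}(q_n)=\langle U_T^{q_n}h,h\rangle$ for $h$ in the cyclic space of $f$ gives a lower bound. The ``$c\,\mathrm{Id}$'' part forces $\sigma$-a.e. $e^{2\pi i q_n x}\to$ a function with real part at least $c$ on a set of positive measure. This can only happen on a $\sigma$-null set, because $\widehat{\phi\sigma}(q_n)\to0$ for all $\phi\in L^1(\sigma)$. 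Hence $\sigma=0$. The absence of a Lebesgue component follows since Lebesgue measure is the spectral type of a Bernoulli (mixing) system.

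The core is producing $q_n$. I would use the classical representation of a 3-IET as an induced map of a rotation $R_\alpha$ on a circle, equivalently as a rotation $R_\alpha$ with a two-step cocycle, i.e.\ a special flow/Kakutani tower over $R_\alpha$ with roof taking values $1,2$. Let $q_n$ be the denominators of $\alpha$. Denseness of $\{q_n\alpha\}$ near $0$ gives that $R_\alpha^{q_n}$ is close to the identity. The Birkhoff sums $S_{q_n}\phi$ of the roof, with $\phi$ a step function of bounded variation, take by Denjoy--Koksma only boundedly many values: $S_{q_n}\phi(x)\in\{m_n, m_n+1\}$ or a similar window of width at most $2$. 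Thus on a set of measure at least $c$ (independent of $n$), the return time $h_n$ with $T^{h_n}$ close to the identity is a fixed integer. One then passes to a subsequence where $h_n$ is constant on a set $A_n$ with $\mu(A_n)\ge c$ and $T^{h_n}x$ is within $\|q_n\alpha\|\to0$ of $x$ on $A_n$. This gives $\liminf \mu(B\cap T^{-h_n}B)\ge c\,\mu(B)$ for intervals $B$, and hence the partial rigidity limit along $h_n$.

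The main obstacle is exactly this uniformity. One must make sure that some single value of the Birkhoff sum is attained on a set of measure bounded below uniformly in $n$. This requires controlling how the discontinuities of the roof sit relative to the partition by $q_n$-towers. I would first try the Denjoy--Koksma bound (variation $\le 2$, so at most three consecutive values, one with measure at least $1/3$ of the tower). If degenerate arithmetic configurations of the length parameters break this, I would fall back to the Rauzy--Veech description of the non-rigid (e.g.\ self-similar) case, where towers have bounded combinatorics. There, partial rigidity along the renormalization times is obtained directly from a uniform lower bound on the measure of the largest Rokhlin tower.
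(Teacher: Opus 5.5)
\textbf{There is a genuine gap: your step from partial rigidity to spectral disjointness is not valid as stated.}

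Suppose $U_T^{h_n}\to J=c\,\mathrm{Id}+(1-c)J'$ weakly, with $J'$ Markov. Then $J$ preserves each cyclic space $\Z_T(f)$ and acts there as multiplication by some $\phi\in L^\infty(\sigma_{T,f})$. Moreover $\phi=0$ $\sigma$-a.e.\ for every Rajchman $\sigma\ll\sigma_{T,f}$; that part of your argument is fine. But all you know about $\phi$ is $\phi=c+(1-c)\psi$ with $|\psi|\le 1$. For $c\le 1/2$ this does not stop $\phi$ from vanishing. Your claim that $\phi$ has real part at least $c$ fails even in the situation at hand:
\begin{itemize}
\item If $S_{q_n}r-h_n$ becomes equidistributed on $\{0,1\}$, the weak limit of $U_T^{-h_n}$ is $\frac12(\mathrm{Id}+U_T)$. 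Its multiplier $\frac12(1+z)$ vanishes at $z=-1$.
\item Three equally likely values give $\frac13(1+z+z^2)$, whose real part is negative on an arc.
\end{itemize}
For a general Markov $J'$ it is worse. Let $\Lambda$ be the $1$-periodic tent function with $\Lambda(0)=1$, supported on $[-\delta,\delta]+\Z$. Its Fourier coefficients are nonnegative, with $\hat\Lambda(0)=\delta$ and $\sum_k\hat\Lambda(k)=1$. So $\sum_k\hat\Lambda(k)U_T^k=\delta\,\mathrm{Id}+(1-\delta)J'$ with $J'$ Markov, yet its multiplier $\Lambda$ vanishes on an arc of Lebesgue measure $1-2\delta$. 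Hence the information that some weak limit dominates $c\,\mathrm{Id}$ cannot, by itself, exclude a Rajchman (even Lebesgue) component.

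\textbf{What is missing is the identification of the \emph{entire} weak limit.} Your Denjoy--Koksma input is the right one; it is exactly what the paper uses. But you must keep all the values of $S_{q_n}r-h_n$, not only the most frequent one.
\begin{itemize}
\item After passing to a subsequence (Prokhorov), their distributions converge to a probability $P$ on $\{-3,\dots,3\}$.
\item Theorem~\ref{thm: operator_limits} of El Abdalaoui--Lema\'nczyk--de la Rue then gives $U_T^{-h_n}\to\sum_k P(\{k\})U_T^k$. It uses rigidity of $R_\alpha$ along $q_n$ plus this convergence.
\item The multiplier is now a nonzero trigonometric polynomial, so it has finitely many zeros. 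A Rajchman measure has no atoms, so $\sigma=0$; this is Lemma~\ref{lem: criterium}.
\end{itemize}

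The same theorem also repairs a second hole in your sketch. From $\mu(A_n)\ge c$ alone you cannot conclude $\liminf\mu(B\cap T^{-h_n}B)\ge c\,\mu(B)$ for a fixed interval $B$, since $A_n$ could largely avoid $B$. You need the level sets of $S_{q_n}r$ to become asymptotically independent of $B$. This comes from the near-invariance $S_{q_n}r\circ R_\alpha-S_{q_n}r=r\circ R_\alpha^{q_n}-r$, which is nonzero only on a set of measure at most $2\|q_n\alpha\|$, together with ergodicity of $R_\alpha$. That is precisely the mechanism behind the theorem.

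Two minor points. The Denjoy--Koksma window has length $4$ and may contain five integers, not three, though any bound works. The Rauzy--Veech fallback is not needed: Denjoy--Koksma applies to every irrational $\alpha$, and rational $\alpha$ gives a periodic system.
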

This answers the questions mentioned above for interval exchanges of three intervals.
The proof uses limits of Koopman operators and the fact that a 3-IET can be seen as a certain extension of a rotation, which in turn allows the use of the classical Denjoy-Koksma inequality.
Because of this, the question on universal spectral properties of IETs of higher number intervals remains largely open.

As we mentioned already, Katok in \cite{Katok1980} showed that no IET has a nontrivial mixing factor. The following result, which is partially a consequence of Theorem \ref{thm: main}, strengthens this property in the framework of 3-IETs. Let us recall that a dynamical system $T:(X,\mu)\to (X,\mu)$ is \emph{lightly mixing} if for every pair of measurable sets $A$ and $B$ of $\mu$-positive measure, we have
\[
\liminf_{n\to\infty}\mu (A\cap T^nB)>0.
\]
We have the following theorem.
\begin{theorem}\label{thm:main2}
    All 3-IETs are disjoint in the sense of Furstenberg from all lightly mixing systems. In particular, none of the 3-IETs has a lightly mixing factor.
\end{theorem}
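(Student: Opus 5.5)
We will prove that every 3-IET $T$ is disjoint from every lightly mixing system $S$ by splitting according to the structure of $T$. A 3-IET with irreducible permutation is, up to a measurable isomorphism, the induced map of a two-point extension, or more precisely of a rotation $R_\alpha$ with a suspension of height one or two over part of the circle. Concretely, $T$ is a first return map of a rotation $R_\alpha$ to an interval, and it can also be written as an integral/special flow type tower over a rotation with roof $f$ taking values $1$ and $2$. We separate three cases.

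\emph{Case 1: $T$ is not ergodic or has a discrete spectral part producing nontrivial eigenvalues.* } In that case we handle the finitely many degenerate configurations, namely periodic ones and those reducible to rotations, directly. Rotations and finite unions of periodic pieces are distal or have discrete spectrum. Lightly mixing systems are weakly mixing, and in fact have no nontrivial compact factors, so disjointness from discrete-spectrum and distal systems follows from Furstenberg's classical results.

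\emph{Case 2: $T$ is rigid.} Rigid systems are disjoint from mildly mixing systems, and lightly mixing systems are mildly mixing. Hence $T\perp S$ holds directly. This covers almost every 3-IET.

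\emph{Case 3: the general non-rigid case.} Here we use the structure exploited in the proof of Theorem~\ref{thm: main}. Via Denjoy–Koksma applied to the roof $f$ along the denominators $q_n$ of $\alpha$, the Birkhoff sums $S_{q_n}f$ take at most two values on most of the space, differing by a bounded amount. Passing to a subsequence, this gives a weak limit of Koopman operators
\[
U_T^{m_n}\to \sum_{j} c_j U_T^{j}, \qquad c_j\ge 0,\ \sum_j c_j \le 1,
\]
with $\sum_j c_j$ bounded below uniformly, say by some $c>0$, and only finitely many $j$ involved. Equivalently, $T$ is partially rigid along $(m_n)$ in a twisted sense.

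Now let $\lambda$ be an ergodic joining of $T$ and $S$. Apply the limit above to $\lambda$. For sets $A\times B$ we get
\[
\lambda(A\times S^{m_n}B) = \lambda\bigl(T^{-m_n}A\times B\bigr)
\]
up to the joining identity $\lambda((T\times S)^{m_n}\cdot)=\lambda(\cdot)$. By a Ryzhikov-type argument one then obtains that the off-diagonal Markov operator $\Phi_\lambda$ satisfies
\[
\Phi_\lambda \circ \Bigl(\lim_n U_S^{m_n}\Bigr) = \Bigl(\sum_j c_j U_T^{j}\Bigr)\circ\Phi_\lambda .
\]
Taking a further subsequence, $U_S^{m_n}\to P$ weakly. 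Light mixing ensures that $P$ is "positive" in the sense that $\langle P1_B,1_{B'}\rangle\ge \delta(B,B')>0$. The key step is then to show that this forces $\lambda=\mu\otimes\nu$. The right-hand side is, up to the positive mass $c$, a finite convex combination of powers of an isomorphism, which is "rigid-like". The left-hand side, using light mixing, spreads mass; comparing the two on $(\ker$ of the projection onto constants$)^\perp$ should give $\Phi_\lambda=$ integral.

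I expect this comparison to be the main obstacle. Light mixing only controls $\liminf$ of set intersections from below and gives no decay of correlations. Theorem~\ref{thm: main} handles the mixing limit $P=\int$ easily, but here I would try to exploit a positivity argument. Iterating, one gets
\[
\Phi_\lambda P^k = \Bigl(\sum_j c_jU_T^j\Bigr)^k\Phi_\lambda .
\]
From the lower bound on $P$ I would extract a sub-Markov part $\delta\cdot\int$. From this the conclusion is that $\lambda$ dominates $\delta\,\mu\otimes\nu$. Ergodicity of $\lambda$, together with the fact that product measure is ergodic (as $S$ is weakly mixing and $T$ is ergodic), would then give $\lambda=\mu\otimes\nu$. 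The "in particular" statement follows, since a common nontrivial lightly mixing factor would yield a non-product joining.
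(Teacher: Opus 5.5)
\paragraph{Assessment.} Your Cases 1 and 2 are fine; Case 2 is exactly how the paper treats rigid 3-IETs. Case 3 is the heart of the theorem, and it has a genuine gap at precisely the step you flag.

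\paragraph{Where Case 3 fails.} Write $Q:=\sum_j c_jU_T^j$, and let $\Pi$ be the projection onto constants. Light mixing of $S$ gives, for each pair $B,B'$ of positive measure, a bound $\langle P1_B,1_{B'}\rangle\ge\delta(B,B')>0$ that depends on the sets. It does not give a uniform $\delta>0$ with $P\ge\delta\,\Pi$ as positive operators. A Markov operator whose kernel is a.e.\ positive but not bounded away from $0$ (e.g.\ a Mehler kernel) satisfies every set-wise bound yet dominates no multiple of $\Pi$. So you cannot conclude that $\lambda$ dominates $\delta\,\mu\otimes\nu$.

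This is not merely a uniformity issue. What $\Phi_\lambda P=Q\Phi_\lambda$ plus light mixing really yields is that $\kappa:=\sum_jc_j(T^{-j}\times Id)_*\lambda$ charges every rectangle $A\times B$ with $\mu(A)\nu(B)>0$. That is compatible with $\kappa\perp\mu\otimes\nu$, which is exactly what a non-product ergodic $\lambda$ produces. For comparison, $\frac13\sum_{i\in\Z}2^{-|i|}(Id\times R^i)_*\mu$ with $R$ ergodic charges all rectangles yet lives on countably many graphs.

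Iterating only loses information. If $T$ is weakly mixing and $c$ is not a point mass (otherwise $T$ is rigid), then $|\sum_jc_jz^j|<1$ off a finite set, so $Q^k\to\Pi$ strongly. The relation $\Phi_\lambda P^k=Q^k\Phi_\lambda$ then degenerates to $\Phi_\lambda P^k\to\Pi$, which for $P=\Pi$ holds for every $\lambda$. In short, the Koopman-limit information about $T$ used in Theorem \ref{thm: main}, together with positivity of the limits of $S$, does not by itself force $\lambda=\mu\otimes\nu$.

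\paragraph{What the paper uses instead.} The missing ingredient is structural information about the self-joinings of non-rigid 3-IETs. The paper uses the Ferenczi--Holton--Zamboni dichotomy (Theorem \ref{thm:FHZ}): every 3-IET is either rigid or has minimal self-joinings. In the latter case $T$ is weakly mixing, $2$-simple, and has trivial centralizer. Theorem \ref{thm: main} enters only to exclude a factor with Lebesgue spectrum, so that Theorem \ref{thm:GHR} makes $T$ simple of all orders.

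Theorem \ref{thm:dJR} then says that if $T$ is not disjoint from the ergodic system $S$, some symmetric power $T^{\odot k}$ is a factor of $S$, and hence is lightly mixing. This is impossible. By Theorem \ref{thm:BC} there are open sets $A,B$ with $A\cap T^{n_k}B=\emptyset$ along a subsequence, so $T$ is not lightly mixing, and Lemma \ref{lem:notlmsymmetric} passes this to $T^{\odot k}$.

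Thus the paper never analyses a general joining of $T$ with $S$ directly. Simplicity theory reduces non-disjointness to a factor relation, where the failure of light mixing for $T$ suffices.
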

It is worth to mentioning that Robertson in \cite{Robertson} showed that linearly recurrent 3-IETs are in fact mildly mixing. This is not a contradiction with Theorem \ref{thm:main2}, since light mixing is a stronger property than mild mixing.

The structure of the paper is as follows: we first recall some elements of spectral theory and how to see 3-IETs as extensions of rotations. Then we prove Theorem \ref{thm: main}. Furthermore, we recall basic definitions and facts regarding joinings, which are followed by the proof of Theorem \ref{thm:main2}.
\section*{Acknowlegdments}
The author is very grateful to Sohail Farhangi for suggesting the problem and several  discussions.
He also acknowledges the support of the NCN OPUS Grant No. 2025/57/B/ST1/00704.

\section{Elements of spectral theory of dynamical systems}\label{sec: spectral}
We now present elementary notions from the spectral theory of the unitary operators.
Since the goal of this article is to study dynamical systems, we present all the definitions in this context, although the majority of them apply also to general unitary operators. 

Consider a measure preserving dynamical system $(X,\mathcal B,\mu, T)$, where $T$ is a $\mu$-measure preserving automorphism of the measure space $(X,\mathcal B,\mu)$.
We will omit the $\sigma$-algebra $\mathcal B$ in the rest of this article, if there is no need to use it. 
The \emph{Koopman operator} $U_T$ is a unitary operator on $L^2_0(X,\mu)=\{f\in L^2(X,\mu)\mid \int f\,d\mu=0\}$ given by the formula 
\[
U_T(f)=f\circ T.
\]
We will study the spectral properties of $U_T$ by studying spectral measures.
Namely, for any $f\in L^2_0(X,\mu)$ we associate the \emph{spectral measure} $\sigma_{T,f}$ on the circle $(\mathbb T,\mathcal B)$ such that its Fourier transform satisfies
\[
\hat\sigma_{T,f}(n)=\langle f\circ T^{-n},f\rangle,
\]
and a \emph{cyclic space} \[
\mathbb Z_T(f):=\overline{\operatorname{span}\{f\circ T^n\mid n\in\Z\}}.
\]
Let $W$ be the unitary operator on $L^2(\mathbb T,\sigma_f)$ which associates to $g=g(z)$ the function $g(z)\cdot z$ (here $z\in\mathbb T$ is a variable).
Then $\Z_T(f)$ is $U_T$ invariant and $U_T|_{\Z_T(f)}$ is isometrically isomorphic to the action of $W$ on $L^2(\mathbb T,\sigma_{T,f})$, that is, there exists a unitary operator $V_{T,f}:\Z_T(f)\to L^2(\mathbb T,\sigma_{T,f})$ such that
\[
V_{T,f}\,\circ U_T|_{\Z_T(f)}=W\circ V_{T,f}. 
\]

By the classical Spectral Theorem, for every measure preserving dynamical system $(X,\mu,T)$, there exists $F\in L^2_0(X,\mu)$ such that for every $f\in L^2_0(X,\mu)$ it holds that
\[
\sigma_{T,f}<\!\!<\sigma_{T,F}.
\]
We denote $\sigma_{T,F}$ by $\sigma_T$ and refer to it as the \emph{maximal spectral type} of $T$.
Many classical ergodic properties have characterisation in terms of the properties of the maximal spectral type.
For example, we have the following
\begin{itemize}
    \item $T$ is ergodic iff $\sigma_T$ has no atom at 1
    \item $T$ is weakly mixing iff $\sigma_T$ is continuous
    \item $T$ is mixing iff $\sigma_T$ is Rajchman, i.e. $\lim_{n\to\infty}|\hat\sigma_{T,f}(n)|=0$.
\end{itemize}
Moreover, we say that $T$ has a \emph{Lebesgue spectrum} if $\sigma_T$ is (equivalent to) Lebesgue measure on $\mathbb T$.
In particular, if $T$ has Lebesgue spectrum, it is mixing. The classical example of an automorphism with Lebesgue spectrum is the standard Bernoulli shift. 

Alternatively, some ergodic properties can be described by studying weak operator limits of Koopman operators.
What is of our interest is the following:
\begin{itemize}
    \item $T$ is mixing iff $\lim_{|n|\to\infty} U^n_T=0$
    \item $T$ is rigid iff there exists an increasing sequence $\{n_k\}_{k=1}^{\infty}$ such that $\lim_{k\to\infty}U^{n_k}_T=Id$, where $Id$ denotes the identity operator.
\end{itemize}

Finally, let us recall that two measure preserving dynamical systems $(X,\mu,T)$ and $(Y,\nu,S)$ are \emph{spectrally disjoint} if
\[
\sigma_T\perp \sigma_S.
\]
In particular, if $T$ and $S$ are \underline{not} spectrally disjoint, then there exist $f\in L^2_0(X,\mu)$ and $g\in L^2_0(Y,\nu)$ and a unitary operator $V:\Z_T(f)\to \Z_S(g)$ such that
\[
V\circ U_{T}|_{\Z_T(f)}=U_{S}|_{\Z_S(g)}\circ V.
\]
Clearly, a rigid transformation cannot be mixing, unless the space is one point, which from now on we exclude from consideration. It is a classical fact that rigid systems are spectrally disjoint from all mixing systems. The next lemma is a generalization of this fact, and it was proven in \cite{Abd2009}

To fully state it, let us recall that for every Laurent polynomial $P(z)=a_{-n}z^{-n}+\ldots +a_mz^m$ and every unitary operator $U$, we consider the operator $P(U)$ defined as
\[
P(U):=a_{-n}U^{-n}+\ldots+a_m U^m.
\]
% Note that if 
% \[
% P(z)=b(z-\eta_1)\ldots (z-\eta_{m+1})(z^{-1}-\kappa_1)\ldots (z^{-1}-\kappa_k),\text{ with } b,\eta_{1},\ldots,\eta_{m+1},\kappa_1,\ldots,\kappa_n\in\mathbb C, 
% \]
% then due to the fact that all powers of $U$ commute, we get that, alternatively, $P(U)$ is given by
% \[
% P(U)=b(U-\eta_1 Id)\circ \ldots\circ(U-\eta_{m+1} Id)\circ (U^{-1}-\kappa_1Id)\circ (U^{-1}-\kappa_{n}Id).
% \]
\begin{lemma}[Lemma 3 in \cite{Abd2009}]\label{lem: criterium}
Let $(X,\mu,T)$ be such that there exists an increasing sequence $(n_k)_{k=1}^{\infty}$ for which we have
\[
U_T^{-n_k}\to P(U_T)\quad \text{on}\quad L^2_0(X,\mu)
\]
in the weak operator topology, where $P:\mathbb C\to \mathbb C$ is a non-zero Laurent polynomial. 
Then $T$ is spectrally disjoint from all mixing systems. %\textcolor{red}{Technically, the paper you cited only shows this result if you assume that $T$ is weakly mixing. It is, of course, trivial to extend the result to the general situation, but this upgrade deserves at a minimum a footnote, and in the worst case a short proof.}
\end{lemma}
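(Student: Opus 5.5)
Suppose, for contradiction, that $T$ is not spectrally disjoint from some mixing system $(Y,\nu,S)$. We aim to find a nonzero $f\in L^2_0(X,\mu)$ whose spectral measure $\sigma_{T,f}$ is absolutely continuous both with respect to $\sigma_T$ and with respect to $\sigma_S$. One way is the Lebesgue decomposition: write $\sigma_S=\sigma_S^{a}+\sigma_S^{s}$ with $\sigma_S^{a}\ll\sigma_T$ and $\sigma_S^s\perp\sigma_T$. Non-disjointness forces $\sigma_S^{a}\neq 0$, so there is a Borel set $A$ with $\sigma_T(A)>0$ and $\sigma_S|_A\sim\sigma_T|_A$. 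Let $E_T(A)$ be the spectral projection of $U_T$, and choose $f=E_T(A)F\neq 0$, where $F$ has maximal spectral type. Then $\sigma_{T,f}=\sigma_T|_A\ll\sigma_S$.

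Next we use that the Rajchman property passes to absolutely continuous measures. If $\nu\ll\rho$ and $\rho$ is Rajchman, then $\nu$ is Rajchman. The standard argument writes $d\nu=h\,d\rho$ with $h\in L^1(\rho)$, approximates $h$ in $L^1(\rho)$ by trigonometric polynomials $q$, and notes that $\widehat{q\rho}(n)$ is a finite combination of shifts of $\hat\rho$, which tends to $0$. Since $S$ is mixing, $\sigma_S$ is Rajchman, hence so is $\sigma_{T,f}$. Thus $\hat\sigma_{T,f}(n)\to 0$ as $|n|\to\infty$, i.e. $\langle U_T^{n}f,f\rangle\to 0$. By the same reasoning, every $g$ in the cyclic space $\Z_T(f)$ has $\sigma_{T,g}\ll\sigma_{T,f}$, so $\langle U_T^{n}g,g\rangle\to 0$. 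Polarization then gives $U_T^{n}\to 0$ weakly on $\Z_T(f)$.

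Now restrict the hypothesis to this cyclic space. $\Z_T(f)$ is invariant under $U_T$ and $U_T^{-1}$, so the weak convergence $U_T^{-n_k}\to P(U_T)$ yields, for $g\in\Z_T(f)$,
\[
\langle P(U_T)g,g\rangle=\lim_k\langle U_T^{-n_k}g,g\rangle=0 .
\]
Hence $P(U_T)$ vanishes on $\Z_T(f)$. Through the isomorphism $V_{T,f}$ this means multiplication by $P(z)$ is zero on $L^2(\mathbb T,\sigma_{T,f})$, i.e. $P=0$ $\sigma_{T,f}$-a.e. A nonzero Laurent polynomial has only finitely many zeros on $\mathbb T$, so $\sigma_{T,f}$ is purely atomic, supported on finitely many points.

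A nonzero atomic measure is not Rajchman, since $\hat\sigma$ of a finite combination of point masses is almost periodic and does not tend to $0$. This contradicts the previous paragraph, so $T$ is spectrally disjoint from every mixing system.

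This argument does not assume $T$ is weakly mixing, which is the extension the source lemma needs. The main point requiring care is the second step: that absolute continuity with respect to a Rajchman measure preserves the Rajchman property, together with the choice of $f$ via spectral projections.
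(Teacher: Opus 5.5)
Your proof is correct and complete, and it handles the non-weakly-mixing case differently from the paper. The paper does not prove this lemma itself. It cites el Abdalaoui's version, which assumes $T$ is weakly mixing, and removes that assumption by splitting $L^2_0(X,\mu)$ into the closed span of eigenfunctions and its orthocomplement $H_c$. A mixing $S$ has continuous maximal spectral type, so $\sigma_S$ is automatically singular to the discrete part of $\sigma_T$. Non-disjointness must therefore occur on $H_c$, where the hypothesis restricts and the weakly mixing version applies.

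You instead run the whole argument directly. From $\sigma_{T,f}\ll\sigma_S$ you get that $\sigma_{T,f}$ is Rajchman. The limit identity then forces $P=0$ $\sigma_{T,f}$-a.e., so $\sigma_{T,f}$ is finitely supported. The final contradiction comes from the Rajchman property itself, not from continuity of the spectrum, which is what restricting to $H_c$ provides.

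Your route is self-contained: weak mixing of $T$ plays no role and no decomposition is needed. It also avoids a point the paper's remark leaves implicit. The restriction $U_T|_{H_c}$ is in general not the Koopman operator of any system, so one must check that the cited proof works for unitary operators with continuous spectrum. The paper's route is shorter on the page, at the cost of delegating the core argument to the reference.
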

It is noteworthy that in \cite{Abd2009}, the author allowed general analytic functions (on the circle) instead of just polynomials in the limit. However, for our purposes, the above statement is enough. Moreover, Lemma 3 in \cite{Abd2009} assumed that $T$ is weakly mixing. However, this assumption can be omitted, since one can simply decompose $L^2(X,\mu)$ into the space generated by eigenfunctions, which gives a discrete part of the spectrum, and its orthocomplement and work on the latter. 
In general proving that the dynamical system satisfies the convergence in the assumption of Lemma \ref{lem: criterium} is not an easy task. However, in the next section, we present a proposition which allows to obtain this kind of convergence for a special class of systems, called integral automorphisms.

\section{3-IETs as integral automorphisms over rotations.} In this section we describe how every non-reducible IET of 3 intervals is either a rotation or is an extension of a rotation in an appropriate sense.

Let us recall that if $I:=[0,1)$, then $T:I\to I$ is an \emph{interval exchange transformation}, if there exists $d\ge 2$ and a partition of $I$ into intervals $I_1,\ldots,I_d$ such that $T$ is a bijection of $I$ which restricted to each $I_j$, is a translation. Then $T$ is uniquely parametrized by a permutation $\pi:\{1,\ldots,d\}\to\{1,\ldots,d\}$ and a vector $\lambda=[\lambda_j]_{j=1}^d$, where $\pi$ describes how the intervals are exchanged by $T$, while $\lambda_j$ is the length of the interval $I_j$. Since $T$ is a piecewise translation, it preserves Lebesgue measure. We always assume that $\pi$ is \emph{non-reducible}, that is $\pi\{1,\ldots,m\}=\{1,\ldots,m\}$ implies $m=d$. Otherwise the dynamics decomposes into at least two disjoint subsystems which can be analyzed separately.

In this note we consider the case when $d=3$. We will focus on symmetric 3-IETs, that is when
\[
\pi(1)=3,\quad\pi(2)=2\quad\text{and}\quad \pi(3)=1.
\]
Otherwise $T$ is a rotation and it is a classical fact that all rotations have discrete spectrum, that is, the maximal spectral type $\sigma_T$ of $T$ is a countable (or finite) sum of Dirac measures. In particular, $T$ is then disjoint from all mixing systems.

Consider a symmetric 3-IET $T$, with length vector $(\lambda_1,\lambda_2,\lambda_3)$. Assume first that $\lambda_3\le \lambda_1$. Then the induced map $R$ obtained by inducing $T$ to $J:=[0,\lambda_1+\lambda_2)$ is a rotation (2-IET) on $J$ given by the length vector $[\lambda_1-\lambda_3, \lambda_2+\lambda_3]$. Then the original map $T$ can be seen as an automorphism of the space $J\times\{0\}\ \cup\ [\lambda_1-\lambda_3,\lambda_1)\times \{1\}$ and the map $T$ acts as $R$ on $(J\setminus [\lambda_1-\lambda_3,\lambda_1))\times\{0\}$, while for $x\in[\lambda_1-\lambda_3,\lambda_1)\times\{0\}$ it takes $(x,0)$ to $(x,1)$ and it sends $(x,1)$ to $(Rx,0)$. In other words, $T$ is an \emph{integral automorphism} over $R$ and the roof function $r(\cdot):=1+\chi_{[\lambda_1-\lambda_3,\lambda_1)}(\cdot)$. Note that we do not exclude the case when $\lambda_1=\lambda_3$. Then $R$ is simply the identity - rotation by 0.

Analogously, if $\lambda_1<\lambda_3$, then by inducing $T$ to interval $[0,\lambda_2+\lambda_3)$, we get that $T$ is an integral automorphism over rotation $R$ given by the length vector $[\lambda_1+\lambda_2,\lambda_3-\lambda_1]$ and the roof function $r(\cdot):=1+\chi_{[0,\lambda_1)}(\cdot)$. The inducing that we used is nothing else but the classical Rauzy-Veech induction, an extremely useful renormalization tool used to study the properties of IETs for \emph{typical} parameters. Since we want to study the properties of all 3-IETs, we will not use this object in more depth.

Representing a 3-IET by its integral automorphism model allows to study their properties from the underlying rotations. Thus, let us recall the basic notions and facts about the latter.

Let $R_{\alpha}:\mathbb T\to\mathbb T$ be a rotation by $\alpha\in[0,1)$, i.e.
\[
R_{\alpha}(x)=x+\alpha\ \operatorname{mod}\,1,
\]
or alternatively, it is a 2-IET given by a length vector $[1-\alpha,\alpha]$. If $\alpha\in \mathbb Q$, that is $\alpha=\frac{p}{q}$ with $(p,q)=1$, then every point is periodic with period $q$. If $\alpha$ is irrational, then $R_{\alpha}$ is uniquely ergodic w.r.t. Lebesgue measure. %and the rotation admits a partition of the interval $[0,1)$ into Rokhlin towers of intervals, a partition which we now recall.

Let $\alpha\in [0,1)\setminus \mathbb Q$ and let $\alpha=[0;a_1,a_2,\ldots)$ be its continued fraction expansion. Let $(q_n)_{n=1}^{\infty}$ be the sequence of denominators of convergents of $\alpha$. Then for every $x\in\mathbb T$ we have
\begin{equation}\label{eq: rotationrigidity}
    d\big((R_{\alpha})^{q_n}x,x\big)=\|q_{n}\alpha\|\to 0\quad \text{as }\ n\to\infty,
\end{equation}
where $d$ is the Euclidean distance on the circle and $\|\cdot\|$ is the distance to the nearest integer.

We finish this section by stating the result by El Abdalaoui, Lema\'nczyk and de la Rue, which yields the convergence of Koopman operators for integral automorphisms, as needed to apply Lemma \ref{lem: criterium}.
\begin{theorem}[Theorem 1 in \cite{ALR2014}]\label{thm: operator_limits}
Let $S$ be an ergodic $\mu$-measure preserving Borel automorphism of a compact metric space $(X,d)$. Let also $f:X\to\Z_+$. Assume that there exists a sequence $(q_n)_{n=1}^{\infty}$ such that 
\begin{equation}\label{eq: rigidity}
\sup_{x\in X}\,d(S^{q_n}x,x)\to 0\quad\text{as}\ n\to\infty.
\end{equation}
Assume also that there exists an increasing integer sequence $(h_n)_{n=1}^{\infty}$ such that
\begin{equation}\label{eq: BSconvergence}
\lim_{n\to\infty}(\sum_{k=0}^{q_n-1}f\circ S^k-h_n)_*\mu= P\quad \text{weakly,}     
\end{equation}
for some probability measure $P$ on $\Z$.
Then, if $S^f$ is the integral automorphism given by $S$ and $f$, the following convergence holds:
\[
U_{S^f}^{-h_n}\to \sum_{k\in\Z}P(\{k\})U_{S^f}^k,
\]
in the weak operator topology.
\end{theorem}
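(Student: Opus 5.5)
The plan is to prove the convergence directly from the tower structure of $S^f$, reducing everything to two facts: the uniform rigidity \eqref{eq: rigidity}, and an asymptotic independence of the level sets of the Birkhoff sums from all other sets, which will follow from ergodicity. Write $T=S^f$. It acts on $X^f=\{(x,j): x\in X,\ 0\le j<f(x)\}$ with the normalized product measure $\mu^f$. Put $S_{q}f(x)=\sum_{i=0}^{q-1}f(S^ix)$. By a tower computation, if $0\le j<f(S^{q_n}x)$ then
\[
T^{S_{q_n}f(x)}(x,j)=(S^{q_n}x,j).
\]
Set
\[
A_{n,k}=\{x\in X: S_{q_n}f(x)-h_n=k\},\qquad k\in\Z.
\]
Then on $A_{n,k}\times\{j\}$ we have $T^{h_n}(x,j)=T^{-k}(S^{q_n}x,j)$. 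By \eqref{eq: BSconvergence} we have $\mu(A_{n,k})\to P(\{k\})$ for each $k$, and since $P$ is a probability measure the family is tight. So for every $\varepsilon>0$ there is a finite $K$ with $\mu\bigl(\bigcup_{|k|\le K}A_{n,k}\bigr)>1-\varepsilon$ for all large $n$.

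The first step is to replace $(S^{q_n}x,j)$ by $(x,j)$. The set of bad points, where $f(S^{q_n}x)\neq f(x)$, has measure tending to zero. To see this, note that \eqref{eq: rigidity} gives $\varphi\circ S^{q_n}\to\varphi$ uniformly for continuous $\varphi$. By density this gives $U_S^{q_n}\to Id$ strongly on $L^2(X,\mu)$. Applying this to the indicators $\chi_{\{f=m\}}$, and truncating $f$ (which is integrable), yields $\mu(f\circ S^{q_n}\neq f)\to0$. The same strong convergence shows that the map $\Phi_n(x,j)=(S^{q_n}x,j)$, defined off a set of small measure, satisfies $F\circ\Phi_n\to F$ in $L^2(\mu^f)$ for every $F$, hence also $F\circ T^{-k}\circ\Phi_n\to F\circ T^{-k}$ in $L^2(\mu^f)$. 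Consequently, for bounded $F,G$ and up to an error of size $O(\varepsilon)$,
\[
\langle F\circ T^{h_n},G\rangle=\sum_{|k|\le K}\int_{X^f}\chi_{A_{n,k}}(x)\,(F\circ T^{-k})(x,j)\,\overline{G(x,j)}\,d\mu^f+o(1).
\]
Here $\chi_{A_{n,k}}$ is regarded as a function on $X^f$ constant on fibres. Taking adjoints and exchanging the roles of $F,G$ turns this into the statement for $U_T^{-h_n}$. Note that $U_T^{-h_n}F=F\circ T^{-h_n}$, so the signs have to be tracked carefully at this point, matching the form of the conclusion.

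The main step, and the one I expect to be the real obstacle, is to show that
\[
\chi_{A_{n,k}}\to P(\{k\})\quad\text{weakly in }L^2(X,\mu)
\]
for each fixed $k$, and therefore also as functions on $X^f$. Weak convergence of the distributions in \eqref{eq: BSconvergence} only gives the global measure of each set, so this asymptotic independence has to come from dynamics. The cocycle identity gives
\[
S_{q_n}f(Sx)-S_{q_n}f(x)=f(S^{q_n}x)-f(x),
\]
so
\[
\mu\bigl(A_{n,k}\triangle S^{-1}A_{n,k}\bigr)\le 2\mu\bigl(f\circ S^{q_n}\neq f\bigr)\to0.
\]
Thus $\|\chi_{A_{n,k}}\circ S-\chi_{A_{n,k}}\|_2\to0$. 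Any weak limit point $g$ of $\chi_{A_{n,k}}$ is therefore $S$-invariant, hence constant by ergodicity, and the constant equals $\lim\mu(A_{n,k})=P(\{k\})$. Since $P$ is a probability measure, tightness ensures no mass escapes, so the weak limit is indeed $P(\{k\})$ for every $k$.

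To conclude, plug this into the displayed sum, using that $(F\circ T^{-k})\overline G\in L^1$ can be approximated by $L^2$ functions. The limit becomes
\[
\sum_{|k|\le K}P(\{k\})\langle F\circ T^{-k},G\rangle .
\]
Letting $\varepsilon\to0$ and $K\to\infty$ gives the full sum over $k\in\Z$. Bounded functions are dense, and all the operators involved are contractions, so the convergence extends to all of $L^2$. This proves the claimed weak operator convergence.
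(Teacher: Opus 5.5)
The paper gives no proof of this statement; it quotes it from \cite{ALR2014} and uses it as a black box. Your argument is a correct, self-contained proof along the standard lines for results of this type. It combines three ingredients: the tower identity $T^{h_n}(x,j)=T^{-k}(S^{q_n}x,j)$ on $A_{n,k}$; the replacement of $S^{q_n}x$ by $x$ using the strong convergence $U_S^{q_n}\to Id$ that follows from uniform rigidity; and the asymptotic independence $\mu(A_{n,k}\cap C)\to P(\{k\})\mu(C)$. The last one you correctly obtain from the cocycle identity (near $S$-invariance of the level sets) together with ergodicity. The sign bookkeeping also checks out: your display yields $U_T^{h_n}\to\sum_k P(\{k\})U_T^{-k}$, and taking adjoints gives exactly the stated form.

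A few points to tighten.

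\textbf{Truncation step.} The bound $\mu\bigl(X\setminus\bigcup_{|k|\le K}A_{n,k}\bigr)<\varepsilon$ is a statement about $\mu$, whereas your integrals are against $\mu^f$, which weights the base by $f$. If $f$ is unbounded, you need absolute continuity of $E\mapsto\int_E f\,d\mu$ to turn this into an $O(\varepsilon)$ error. This uses $f\in L^1(\mu)$, which is implicit in $S^f$ preserving a probability measure, and it is the same device you already use for the bad set $\{f\circ S^{q_n}\neq f\}$.

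\textbf{Convergence $F\circ\Phi_n\to F$.} This should be verified level by level. Set $F_j=F(\cdot,j)\chi_{\{f>j\}}$; then show $\sum_j\|F_j\circ S^{q_n}-F_j\|_2^2\to0$ by dominated convergence in $j$, using $\sum_j\|F_j\|_2^2<\infty$.

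\textbf{Minor remarks.} Tightness plays no role in identifying the weak limit of $\chi_{A_{n,k}}$; that uses only $\mu(A_{n,k})\to P(\{k\})$. Tightness is needed only for the truncation to $|k|\le K$. Also, the inclusion $A_{n,k}\triangle S^{-1}A_{n,k}\subset\{f\circ S^{q_n}\neq f\}$ holds directly, without the factor $2$.
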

As evidenced by the above theorem, to study the limits of Koopman operators, we need to control the Birkhoff sums of the roof function $r$, with respect to the underlying rotation. This will be the core of the proof of Theorem \ref{thm: main}, to which the next section is devoted.

\section{Limit distributions of Birkhoff sums - proof of Theorem \ref{thm: main}}

Let $T:[0,1)\to[0,1)$ be a symmetric IET of three intervals $I_1, I_2$ and $I_3$, given by the length vector $[\lambda_1,\lambda_2,\lambda_3]$. As shown in the previous section, $T$ can be represented as an integral automorphism over a rotation $R_{\alpha}$ for some $\alpha\in[0,1)$ and a roof function of the form $r:=1+\chi_{E}$, where $E$ is a left-closed right-open subinterval of $[0,1)$ (up to proper rescaling).  
% interval of the form $[1-\alpha,y)$ with $y>1-\alpha$ or $[0,y)$ with $y<1-\alpha$, when $\lambda_1\ge \lambda_3$ or $\lambda_1<\lambda_3$, respectively. For the rest of the proof we assume that $\lambda_1\ge\lambda_3$ holds, that is we consider the first case. The second case goes along the same lines.

Let then $R_{\alpha}^r$ be the integral automorphism, isomorphic to the initial IET, with $\alpha\in[0,1)$ and $r=1+\chi_{E}$, where $E\subset \mathbb T$ is a subinterval of length $|E|$. We consider cases depending first on the rationality of $\alpha$.
%, and then, in the irrational case, the relative position of the point $y$ w.r.t. the orbit of %$0$.
\subsection{$\alpha$ is rational.} Assume first that $\alpha\in[0,1)\cap \mathbb Q$, i.e. $\alpha=\frac{p}{q}$, with $(p,q)=1$. First, note that for every $x\in\mathbb T$, the number of points from the orbit $\{x,R_{\alpha}x,\ldots,R^{q-1}_{\alpha}x\}$ which belong to $E$ is either $\lfloor q|E|\rfloor$ or $\lceil q|E|\rceil$. Then, for $R_{\alpha}^r\sim T$, every point is $(q+\lfloor q|E|\rfloor)\cdot (q+\lceil q|E|\rceil)$-periodic. Thus $T$ has a discrete spectrum and is hence spectrally disjoint from  weakly mixing systems, and in particular, from every mixing system.

\subsection{$\alpha$ is irrational.} Let $\{q_n\}$ be the sequence of denominators of convergents of $\alpha$. Note that $r$ is a function of bounded variation, with $Var(r)=2$. Thus, by the classical Denjoy-Koksma inequality, for every $n\in\N$ and every $x\in\mathbb T$ we have
\[
-2\le \sum_{j=0}^{q_n-1}r((R_\alpha)^jx)-q_n(1+|E|)\le 2
\]
and thus
\[
-3\le \sum_{j=0}^{q_n-1}r((R_\alpha)^jx)-\lfloor q_n(1+|E|)\rfloor\le 3.
\]
By Prokhorov's Theorem, passing to a subsequence if necessary yields
\[
\Big(\sum_{j=0}^{q_n-1}r((R_\alpha)^jx)-\lfloor q_n(1+|E|)\rfloor \Big)_*Leb\to P,
\]
where $P$ is a probability measure on $\mathbb Z$ with $P\{-3,-2,-1,0,1,2,3\}=1$. This, together with \eqref{eq: rotationrigidity}, means that we can apply Theorem \ref{thm: operator_limits} with $h_n:=\lfloor q_n(1+|E|)\rfloor$ and get that 
\[
U_{R^r_{\alpha}}^{-h_n}\to \sum_{k=-3}^3P(\{k\})U_{R^r_{\alpha}}^{k},
\]
which in view of Lemma \ref{lem: criterium} gives spectral disjointness from all mixing systems and finishes the proof of Theorem \ref{thm: main}.

\section{Joinings and symmetric products}
We will now pass to the notions necessary to prove (and properly state) Theorem \ref{thm:main2}. Let us recall that given two probability preserving systems $(X,\mathcal B,\mu,T)$ and $(Y,\mathcal C,\nu,S)$, a \emph{joining} $\eta$ is a probability measure on $(X\times Y,\mathcal B\otimes\mathcal C)$ such that
\begin{enumerate}
    \item the marginals of $\eta$ on $X$ and $Y$ are $\mu$ and $\nu$ respectively, and
    \item $\eta$ is $T\times S$-invariant.
\end{enumerate}
Note that $\mu\otimes \nu$ is always such a measure. We say that $T$ and $S$ are \emph{disjoint in the sense of Furstenberg} if $\mu\otimes \nu$ is the only joining between $T$ and $S$. 

For the purpose of the proof of Theorem \ref{thm:main2}, we will put an additional focus on the \emph{self-joinings}, i.e. joinings of two identical systems $(X,\mathcal B,\mu,T)$. We will denote such joinings as $\mathcal J_2(T)$. Moreover we extend the definition of a joining into an arbitrary amount of coordinates $k\ge 2$. We then refer to them as \emph{$k$-self-joinings} and denote by $\mathcal J_k(T)$. We also denote by $\mu^{\otimes k}$, a product measure of $k$ copies of $\mu$. 

For any $k\ge 2$ (or $k=\infty$), we say that $\eta\in \mathcal J_k(T)$ is an \emph{off-diagonal joining} if 
\[
\eta=(\varphi_1\times\ldots\times\varphi_k)_*\mu,\footnote{Here $\varphi_1\times\ldots\times\varphi_k:X\to \R^{k}$ and it is given by $\varphi_1\times\ldots\times\varphi_k(x)=(\varphi_1(x),\ldots,\varphi_k(x))$.}
\]
where $\varphi_1,\ldots,\varphi_k$ are elements of $C(T)$, the \emph{centralizer} of $T$. Naturally, if $T^{\times k}$ denotes the product of $k$ copies of $T$, and $\eta\in \mathcal J_k(T)$ is an off-diagonal joining, then $(T^{\times k},\eta)$ is isomorphic to $(T,\mu)$, with isomorphism given by $\varphi_1\times\ldots\times\varphi_k$. We say that $T$ is \emph{$k$-simple} if every ergodic joining in $\mathcal J_k(T)$ is either an off-diagonal joining or an independent product of off-diagonal joinings. 

Let us also recall a stronger notion than 2-simplicity. Namely, we say that $(X,\mu,T)$ has \emph{minimal self-joinings} if every ergodic joining $\eta\in \mathcal J_2(T)$ is either
\begin{itemize}
    \item a product joining $\mu\otimes\mu$, or
    \item it is an off-diagonal joining of the form $(Id\times T^n)_*\mu$ for some $n\in\Z$.
\end{itemize}
Note that if $T$ has minimal self-joinings, then it cannot have nontrivial factors. Indeed, otherwise a relative independent extension over this factor yields a nontrivial ergodic self-joining. Consequently, $T$ is ergodic, since it would otherwise have a nontrivial identity factor.

We are now able to state the main result concerning simplicity, that is going to be of major use to us.
\begin{theorem}[Theorem 4 in GHR]\label{thm:GHR}
Let $(X,\mu,T)$ be a $2$-simple weakly mixing system which does not have a factor whose spectrum is Lebesgue. Then $(X,\mu,T)$ is $k$-simple for every $k\ge 2$.
\end{theorem}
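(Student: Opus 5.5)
The plan is to argue by induction on $k$, as in the classical treatment of simple systems by Glasner, Host and Rudolph. The base case $k=2$ is the hypothesis. Assume $T$ is $k$-simple and let $\lambda\in\mathcal J_{k+1}(T)$ be ergodic.

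\textbf{Case 1: some pair is not independent.} Every two-dimensional marginal $\lambda_{ij}$ is an ergodic 2-self-joining; ergodicity holds because $\lambda_{ij}$ is a factor of the ergodic system $(T^{\times(k+1)},\lambda)$. By 2-simplicity, each $\lambda_{ij}$ is therefore either $\mu\otimes\mu$ or an off-diagonal joining $(Id\times\varphi)_*\mu$ with $\varphi\in C(T)$. Suppose $\lambda_{ij}$ is off-diagonal for some $i\neq j$. Then $x_j=\varphi(x_i)$ holds $\lambda$-a.e. Hence $\lambda$ is the image of its $k$-dimensional marginal obtained by deleting coordinate $j$, under a map that inserts $\varphi(x_i)$ in position $j$. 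That marginal is ergodic, so by the induction hypothesis it is a product of off-diagonal joinings. Adding the coordinate $\varphi(x_i)$ to the block containing $i$ shows that $\lambda$ has the same form.

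\textbf{Case 2: all pairs are independent.} Now every $\lambda_{ij}$ equals $\mu\otimes\mu$. By the induction hypothesis, every $k$-dimensional marginal of $\lambda$ is a product of off-diagonal joinings. Since its pairs are independent, each block is a single coordinate, so every $k$-marginal is $\mu^{\otimes k}$. It remains to rule out an ergodic joining all of whose proper marginals are product measures but which is not itself $\mu^{\otimes(k+1)}$. View $\lambda$ as a joining between the system $Y=(X^{k},\mu^{\otimes k},T^{\times k})$ and the last copy of $X$. I would use the structure theorem for joinings of a simple system with an arbitrary system (Veech, del Junco--Rudolph). Since $T$ is 2-simple and weakly mixing, any joining of $X$ with another system is either independent or forces $X$ to be, relative to a factor, a compact group extension. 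Combining this with the independence of the last coordinate from each $k$-element subfamily, I expect to show that the last coordinate is measurable with respect to the first $k$: $x_{k+1}=\Phi(x_1,\dots,x_k)$ $\lambda$-a.e., with $\Phi$ intertwining $T^{\times k}$ and $T$.

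\textbf{Deriving a Lebesgue factor.} Fix $f\in L^2_0(X,\mu)$. Independence of the last coordinate from every $(k-1)$-subfamily gives that $f\circ\Phi$ is orthogonal to all functions of at most $k-1$ coordinates. Hence $f\circ\Phi$ lies in the tensor-product part $L^2_0(X)^{\otimes k}$. This yields the relation
\[
\sigma_{T,f}\ll\sigma_T^{*k}
\]
for all $f$, together with a large family of such identities obtained by composing $\Phi$ with elements of $C(T)$. Weak mixing makes $\sigma_T$ continuous. The aim is to show that these relations force a nontrivial factor of $T$ (generated by functions of the form $f\circ\Phi$ and its translates) whose spectrum is Lebesgue. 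That contradicts the hypothesis, so Case 2 gives $\lambda=\mu^{\otimes(k+1)}$, completing the induction.

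\textbf{Main obstacle.} I expect the last step, extracting a factor with Lebesgue spectrum from a pairwise-independent but non-independent joining, to be the hardest. The convolution relation alone does not force absolute continuity, since Gaussian-type measures also satisfy such relations. I would therefore first exploit the explicit group-extension structure produced in Case 2, expressing $\Phi$ through a cocycle into a compact group. I would then try to show that the resulting factor carries countable Lebesgue spectrum in the way this happens for skew-product constructions.
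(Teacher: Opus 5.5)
The paper does not prove this statement; it quotes it from Glasner--Host--Rudolph, so your sketch has to stand on its own. Your Case 1 and the reduction at the start of Case 2 are correct and standard. If some two-dimensional marginal is the graph of some $\varphi\in C(T)$, then $\lambda$ is determined by an ergodic $k$-marginal and the induction closes; otherwise every $k$-marginal is $\mu^{\otimes k}$.

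\textbf{The gap.} What remains is the claim that an ergodic $(k+1)$-self-joining whose $k$-marginals are all independent is itself independent. That claim is the entire content of the theorem, and you do not prove it; you state it as an expectation and an aim. Two steps of your sketch also fail as written.

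First, apply the Veech/del Junco--Rudolph structure theorem to $\lambda\neq\mu^{\otimes(k+1)}$, viewed as a joining of $(X^k,\mu^{\otimes k},T^{\times k})$ with $X$. It does not give $x_{k+1}=\Phi(x_1,\dots,x_k)$. It gives a compact subgroup $K\subseteq C(T)$ such that only the factor $X/K$ of the last coordinate is measurable with respect to the first $k$ coordinates, with $\lambda$ relatively independent over it. Nothing in your argument forces $K=\{Id\}$, so $\Phi$ need not exist. What you actually obtain is a $T^{\times k}$-equivariant embedding of $L^2_0(X/K)$ into $L^2_0(X)^{\otimes k}$, which fits the hypothesis being phrased in terms of a factor.

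Second, even granting this, the only spectral information you extract is $\sigma_{T/K}\ll\sigma_T^{*k}$. As you note yourself, that forces no absolute continuity. ``Exploit the group-extension structure as for skew products'' is not an argument.

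\textbf{The missing ingredient.} What is needed is Host's theorem: pairwise independent joinings of weakly mixing systems with singular spectrum are independent. Its proof is a harmonic-analysis argument on measures on tori built from the multiple correlations $\int f_1\circ T^{n_1}\cdots f_m\circ T^{n_m}\,d\lambda$. It uses the independence of the lower-order marginals simultaneously, which is far stronger than a single convolution bound.

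Combined with your Case 1, Host's theorem settles the statement at once under the stronger hypothesis that $\sigma_T$ is singular: in Case 2 all pairs are independent, hence $\lambda=\mu^{\otimes(k+1)}$. That stronger hypothesis is all the paper actually needs, since Theorem~\ref{thm: main} gives $\sigma_T\perp$ Lebesgue for every 3-IET.

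The theorem as stated assumes only that $T$ has no factor with Lebesgue spectrum. Proving it under that weaker hypothesis needs a further argument, using the structure $X\to X/K$ to place the non-singular part of the spectrum in a factor. Your proposal does not provide this.
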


The reason why we need the above theorem comes from the next fact that concerns symmetric powers. Let us recall that given $(X,\mu,T)$ for $k\in\N$, a \emph{k-symmetric power} is the map $T^{\odot k}$ on $X^{\odot k}$, where $X^{\odot k}=\{\{x_1,\ldots,x_k\}\mid x_1,\ldots,x_k\in X\}$, given by 
\[
T^{\odot k}\{x_1,\ldots,x_k\}=\{T(x_1),\ldots, T(x_k)\}.
\]
Note that $T^{\odot k}$ is a factor of $T^{\times k}$, with the factorizing map $\pi$ given simply by formula $\pi(x_1,\ldots,x_k)=\{x_1,\ldots,x_k\}$, with the invariant measure taken naturally as $\mu^{\odot k}:=\pi_*\mu^{\otimes k}$ i.e. the projection of the product measure via the factorizing map. The following result is the most important general fact that will serve as the main tool to prove Theorem \ref{thm:main2}.

\begin{theorem}[Theorem 4.1 in \cite{dJR}]\label{thm:dJR}
    Assume that $(T,\mu)$ is simple of all orders with a trivial centralizer and $(S,\nu)$ is ergodic. If $T$ and $S$ are not disjoint, then there exists $k\in\N$ such that $(T^{\odot k},\mu^{\odot k})$ is a factor of $(S,\nu)$.
\end{theorem}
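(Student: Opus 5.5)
The plan is to start from a non-product joining and read off, fibre by fibre over $S$, a finite random subset of $X$.

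Since $T$ and $S$ are not disjoint, there is a joining $\lambda\neq\mu\otimes\nu$ of $(X,\mu,T)$ and $(Y,\nu,S)$. Because $S$ is ergodic, we may take $\lambda$ ergodic: when $T$ is ergodic, the ergodic components of a joining are again joinings. Disintegrate $\lambda=\int_Y \delta_y\otimes\lambda_y\,d\nu(y)$ over $Y$. Invariance gives the equivariance $T_*\lambda_y=\lambda_{Sy}$ for $\nu$-a.e.\ $y$. Moreover $\lambda_y=\mu$ fails on a set of positive, hence by ergodicity full, $\nu$-measure.

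The tool is the infinite relatively independent self-joining
\[
\widehat\eta:=\int_Y \lambda_y^{\otimes\mathbb N}\,d\nu(y)\in\mathcal J_\infty(T),
\]
together with its finite-dimensional marginals $\eta_k:=\int\lambda_y^{\otimes k}\,d\nu\in\mathcal J_k(T)$. By hypothesis $T$ is simple of all orders with trivial centralizer, so $C(T)=\{T^n\}$. Hence each ergodic component of every $\eta_k$, and so of $\widehat\eta$ by compatibility over $k$, has a rigid form. There is a partition of $\mathbb N$ into classes. Within a class every coordinate is $T^{n}$ of a base coordinate, for some fixed integer $n$. Distinct classes are independent and each is $\mu$-distributed.

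I then exploit exchangeability. For a.e.\ $y$, the coordinates of $\lambda_y^{\otimes\mathbb N}$ are i.i.d.\ with law $\lambda_y$. By the strong law of large numbers, $\lambda_y$ is recovered as the limit of empirical measures of $(x_1,\dots,x_N)$.

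In an ergodic component whose partition has infinitely many singleton classes, those coordinates are i.i.d.\ $\mu$, and the empirical measure has a $\mu$-part. I want to show that singleton classes force $\lambda_y=\mu$, using the non-atomic part of $\lambda_y$. Conversely, a class of positive density contributes an atom. Since $\lambda_y\neq\mu$ a.e., I aim to conclude that a.e.\ $\lambda_y$ is purely atomic with finitely many atoms $x_1(y),\dots,x_k(y)$. Finiteness comes from the fact that there are finitely many classes, each of positive frequency.

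The weights are permuted by equivariance and are $S$-invariant functions of $y$, hence constant. I will also need them to be all equal, $1/k$. The intended argument is that atoms lying in the same class, $x_j=T^{n}x_i$, can be merged into a single atom: trivial centralizer plus ergodicity makes $T$ aperiodic, so these are distinct points. After merging, one should get a homogeneous structure, but I do not yet have a rigorous argument for equality of the weights.

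Given this, define $\Phi:Y\to X^{\odot k}$ by $\Phi(y)=\{x_1(y),\dots,x_k(y)\}$. This is well defined as an unordered set, and it is equivariant: $\Phi(Sy)=T^{\odot k}\Phi(y)$. It remains to identify $\Phi_*\nu$. Lift it to the ordered $k$-joining obtained by choosing one atom per class. This is an ergodic product of off-diagonal joinings whose coordinates are, by construction, in distinct classes, hence independent. So $\Phi_*\nu=\pi_*\mu^{\otimes k}=\mu^{\odot k}$, and $(T^{\odot k},\mu^{\odot k})$ is a factor of $(S,\nu)$.

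The main obstacle is the middle step: turning the classification of ergodic components of $\widehat\eta$ into the statement that $\lambda_y$ is uniform on $k$ points. This means ruling out a continuous part of $\lambda_y$ and showing the weights are equal, which must be done across the ergodic decomposition of $\widehat\eta$ (not ergodic in general). I would attack it first via a de Finetti/Hewitt--Savage argument on the exchangeable sequence, comparing the tail $\sigma$-algebra with the product structure given by simplicity.
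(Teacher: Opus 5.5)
There is a genuine gap. It is the middle step you flag, but the trouble is not a missing de Finetti computation. Your overall architecture is the right one: fibres $\lambda_y$, an equivariant finite set of atoms, and $k$-simplicity to identify the image measure. However, every property you need in the middle step fails for non-ergodic joinings: no continuous part, finitely many atoms, equal weights, and atoms on distinct $T$-orbits. After choosing $\lambda$ ergodic, your sketch never uses that ergodicity again, and $\lambda_y\neq\mu$ is far too weak a substitute.

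For example, take $S=T$ and disintegrate over the second coordinate. The joinings $\frac12\,\mu\otimes\mu+\frac12(Id\times Id)_*\mu$ and $\frac13(Id\times Id)_*\mu+\frac23(Id\times T)_*\mu$ have fibres $\lambda_y=\frac12\mu+\frac12\delta_y$ and $\lambda_y=\frac13\delta_y+\frac23\delta_{T^{-1}y}$. Both are $\neq\mu$, and both are compatible with the class structure of $\widehat\eta$ and with exchangeability. Consequently:
\begin{itemize}
\item ``Singleton classes force $\lambda_y=\mu$'' is false as stated.
\item ``Finitely many classes'' is unjustified: before the continuous part is excluded, components with infinitely many classes do occur.
\item ``Merging'' atoms $x$ and $T^nx$ is the wrong move. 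Such pairs must be shown not to exist, since otherwise your $\Phi$ (the set of \emph{all} atoms) pushes $\nu$ onto a $\mu^{\odot k}$-null set. In the second example, $\Phi_*\nu$ lives on $\{\{x,Tx\}:x\in X\}$.
\item In that situation the lift that ``chooses one atom per class'' is not a lift of $\Phi_*\nu$, and there is no reason for it to be ergodic.
\end{itemize}

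The missing idea is extremality of the ergodic joining $\lambda$. This is how del Junco and Rudolph argue; the paper itself only quotes their result. Suppose $\lambda_y=\sum_i\lambda^{(i)}_y$ is a measurable splitting with $T_*\lambda^{(i)}_y=\lambda^{(i)}_{Sy}$. Then the measures $\int\delta_y\otimes\lambda^{(i)}_y\,d\nu(y)$ are invariant under the product map, mutually singular, and sum to $\lambda$. Since an ergodic measure is extremal, only one of them is non-zero. Apply this three times:
\begin{itemize}
\item to the continuous/atomic decomposition;
\item to the grouping of atoms by weight;
\item to the split, inside each cluster $A(y)\cap\{T^nx:n\in\Z\}$, into its base point (minimal exponent, which is canonical and equivariant) and the rest.
\end{itemize}
You get that $\lambda_y$ is uniform on $k<\infty$ points lying on pairwise distinct $T$-orbits; finiteness holds because at most $1/w$ atoms have weight $w$.

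The existence of atoms needs only $2$-simplicity, not $\widehat\eta$. The measure $\eta_2=\int\lambda_y\otimes\lambda_y\,d\nu$ satisfies $\int f\otimes\bar f\,d\eta_2=\|E_\lambda(f\mid Y)\|^2$, so $\eta_2\neq\mu\otimes\mu$. Some ergodic component of $\eta_2$ is then $(Id\times T^n)_*\mu$, which charges the graph of $T^n$; this is impossible if the $\lambda_y$ are continuous.

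Only at the end does $k$-simplicity enter. The symmetrized ordered lift of $\Phi_*\nu$ is a $k$-joining, since its marginals are $\int\lambda_y\,d\nu=\mu$. Each of its ergodic components is a product of off-diagonal joinings. Because $\{x_j=T^nx_i\}$ is null for $i\neq j$, each component is $\mu^{\otimes k}$, whence $\Phi_*\nu=\mu^{\odot k}$.
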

We remark that in \cite{dJR}, the authors do not need the assumption on trivial centralizer but then the factor in the conclusion is actually a symmetric power of $T/K$ on $X/K$, where $K\subset C(T)$ is a compact subgroup of the centralizer of $T$. However, under the assumption of trivial centralizer, i.e. $C(T)=\{T^k\mid k\in\Z\}$, the only compact subgroup of $C(T)$ is $\{Id\}$.

Theorem \ref{thm:dJR} gives a very strong conclusion under a strong assumption of simplicity of all orders. In the next section, we will recall a fact by Ferenczi, Holton and Zamboni, which, together with Theorem \ref{thm: main} gives that 3-IETs are either rigid or simple of all orders, which perfectly fits into our setup.

We will finish this section by showing the following simple lemma, which will be used in the final step of the proof of Theorem \ref{thm:main2}.
\begin{lemma}\label{lem:notlmsymmetric}
    If $(X,\mathcal B,\mu,T)$ is not lighlty mixing, then for any $k\in\N$, the symmetric power $T^{\odot k}$ is also not lightly mixing.
\end{lemma}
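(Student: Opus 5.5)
Since $T$ is not lightly mixing, there are measurable sets $A,B\subset X$ with $\mu(A),\mu(B)>0$ and
\[
\liminf_{n\to\infty}\mu(A\cap T^nB)=0,
\]
so there is an increasing sequence $(n_j)$ with $\mu(A\cap T^{n_j}B)\to 0$. The plan is to lift $A$ and $B$ to sets $\tilde A,\tilde B\subset X^{\odot k}$ of positive $\mu^{\odot k}$-measure that witness the failure of light mixing along the same sequence $(n_j)$. The natural choice is to require all $k$ coordinates to lie in $A$, respectively $B$:
\[
\tilde A:=\pi(A^{k})=\{\{x_1,\ldots,x_k\}\mid x_i\in A\ \text{for all } i\},\qquad \tilde B:=\pi(B^{k}).
\]

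The first step is measurability and positivity. The preimage $\pi^{-1}(\tilde A)$ is exactly $A^{k}$: an unordered tuple has all entries in $A$ if and only if every ordering does. Hence $\tilde A$ is measurable in the quotient $\sigma$-algebra, and
\[
\mu^{\odot k}(\tilde A)=\mu^{\otimes k}(A^k)=\mu(A)^k>0.
\]
The same argument gives $\mu^{\odot k}(\tilde B)=\mu(B)^k>0$.

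The second step is the intersection computation. Because $\pi$ intertwines $T^{\times k}$ with $T^{\odot k}$, we have $\pi^{-1}\big((T^{\odot k})^n\tilde B\big)=(T^{\times k})^n(B^k)=(T^nB)^k$. Therefore
\[
\mu^{\odot k}\big(\tilde A\cap (T^{\odot k})^n\tilde B\big)=\mu^{\otimes k}\big(A^k\cap (T^nB)^k\big)=\mu(A\cap T^nB)^k .
\]
Along $n=n_j$ the right-hand side tends to $0$, so the $\liminf$ of the left-hand side is $0$ and $T^{\odot k}$ is not lightly mixing.

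The only point needing care is the identification $\pi^{-1}(\pi(A^k))=A^k$, i.e.\ that $A^k$ is saturated for the action of the symmetric group permuting coordinates. This holds because $A^k$ is invariant under coordinate permutations, and the quotient $\sigma$-algebra on $X^{\odot k}$ is the image of the permutation-invariant sets. The rest is routine bookkeeping.
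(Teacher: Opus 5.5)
Your proof is correct and follows the paper's argument. You lift $A,B$ to $A^{\odot k},B^{\odot k}$, pull back through $\pi$ to $A^{\times k}\cap (T^nB)^{\times k}$ in the product, and control the product measure by $\mu(A\cap T^nB)$. You are also a bit more careful than the paper: you verify the saturation $\pi^{-1}(\pi(A^k))=A^k$, you check positivity $\mu(A)^k,\mu(B)^k>0$ (the paper only states $\mu(A)>0$), and you compute the exact value $\mu(A\cap T^nB)^k$ rather than only an upper bound.
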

\begin{proof}
    Since $T$ is not lightly mixing, there exists a set $A\in\mathcal B$, with $\mu(A)>0$
    such that 
    \[
    \liminf_{n\to\infty}\mu(A\cap T^n B)=0.
    \]
    Then if for $C\in\{A,B\}$, we denote $C^{\odot k}:=\{\{x_1,\ldots,x_k\}\mid x_1,\ldots,x_k\in C\}$ and $C^{\times k}:=\{(x_1,\ldots,x_k)\mid x_1,\ldots,x_k\in C\}$, then
    \[
    \begin{split}
    \liminf_{n\to\infty} &\mu^{\odot k}\big(A^{\odot k}\cap (T^{\odot k})^n(B^{\odot k})\big)=\liminf_{n\to\infty} \mu^{\otimes k}\big(A^{\times k}\cap (T^{\times k})^n(B^{\times k})\big)\\
    &\le \liminf_{n\to\infty}(A\cap T^nB)=0,
    \end{split}
    \]
    which gives that $T^{\odot k}$ is not lightly mixing.
\end{proof}

\begin{comment}{\begin{remark}\label{rem:infjoinings}
    It is easy to prove by induction, that if $(X,\mu,T)$ is simple of all orders, for every ergodic joining $\eta\in \mathcal J_{\infty}(T)$, there exists a (possibly finite) partition $P_1\sqcup P_2\sqcup P_3\sqcup\ldots=\mathbb N$, such that for every $j=1,2,\ldots$ the projection of $\eta$ to coordinates given by $P_j$ is an off-diagonal joining and $\eta$ is a product of such joinings.
\end{remark}}\end{comment}

\section{3-IETs are disjoint from lightly mixing systems - proof of Theorem \ref{thm:main2}}
The proof of Theorem \ref{thm:main2} naturally breaks into two cases, given by the following theorem of Ferenczi, Holton and Zamboni.
\begin{theorem}[Corollary 5.2 in \cite{FHZ}]\label{thm:FHZ}
    Every 3-IET $T$ is either 
    \begin{enumerate}
    \item rigid, or
    \item has minimal self-joinings.
    \end{enumerate}
\end{theorem}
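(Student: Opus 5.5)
Theorem \ref{thm:FHZ} is quoted from \cite{FHZ}, where it is proved through the combinatorics of 3-IETs. If I had to prove it with the tools of this note, I would use the integral automorphism model $T\cong R_\alpha^r$ with $r=1+\chi_E$ from Section 3, and split into cases according to how the Birkhoff sums $S_{q_n}r(x)=\sum_{j=0}^{q_n-1}r(R_\alpha^jx)$ are distributed.

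\emph{Rational case and the rigidity alternative.} If $\alpha\in\mathbb Q$, the computation of Section 4.1 shows that $T$ is periodic, hence rigid. Assume now that $\alpha\notin\mathbb Q$. By the Denjoy--Koksma argument of Section 4.2, the limit laws $P$ of $S_{q_n}r-h_n$ are supported on $\{-3,\dots,3\}$. Suppose that along some subsequence there is a value $h_n$ with
\[
\mathrm{Leb}\{x: S_{q_n}r(x)=h_n\}\to 1,
\]
so that some limit law is $P=\delta_0$. Then Theorem \ref{thm: operator_limits} gives $U_T^{-h_n}\to Id$ weakly, and $T$ is rigid. The same conclusion should follow if some accelerated sum $S_{mq_n}r$ with bounded $m$, or $S_{q_n+q_{n-1}}r$, concentrates on one value. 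This uses the rigidity \eqref{eq: rotationrigidity} of $R_\alpha$ along these times as well.

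\emph{Non-rigid alternative: minimal self-joinings.} Suppose no such concentration occurs. Then there is $\delta>0$ such that for all large $n$, at least two values of $S_{q_n}r$ are each taken on sets of measure $\ge\delta$. The sets $\{S_{q_n}r=c\}$ are unions of intervals whose endpoints lie on the orbit of the endpoints of $E$, and this structure is controlled by the continued fraction of $\alpha$ together with the position of $E$. I would turn this persistent two-valuedness into a Ratner-type property (a switchable or weak $R$-property). For two nearby points in different fibres, at time scales $\approx q_n$ their Birkhoff sums differ by exactly $\pm1$ on a set of times of proportion bounded below, while the base points stay close. Then I would apply the standard Ratner--Rudolph argument to an ergodic self-joining $\eta\neq\mu\otimes\mu$. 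This argument should show that $\eta$ is invariant under $Id\times T^{\pm1}$ composed with a finite-to-one map. Combined with ergodicity, weak mixing and the triviality of the centralizer, it should force $\eta=(Id\times T^k)_*\mu$. The centralizer and weak mixing themselves would be derived along the way from the same drift property.

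\emph{Main obstacle.} The hard step is the dichotomy itself: the cases must exhaust all parameters, and the failure of concentration must give a quantitative $\pm1$ drift uniform in $n$, not just for a subsequence. I would try first to encode the three-interval dynamics by the (non-Rauzy) induction of \cite{FHZ}. In that encoding, ``no concentration'' becomes a boundedness condition on the partial quotients and the relative position of $E$. Under that condition the drift estimate reduces to counting orbit points in intervals via the three-gap theorem.
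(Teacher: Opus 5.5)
Your first case is fine as a sufficient condition for rigidity. The second case fails, and so the proposal does not prove the statement. Note that the paper gives no proof here: it quotes Corollary 5.2 of \cite{FHZ}, proved there combinatorially for 3-IETs with the Keane condition, and treats the remaining 3-IETs via \cite{Bernazzani} (finite towers over rotations, hence rigid).

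\textbf{Your two cases do not split rigid from non-rigid.} Failure of your concentration conditions does not exclude rigidity. A rigid aperiodic $T$ cannot have minimal self-joinings: the weak closure of $\{T^n\}$ is perfect, so $C(T)$ is uncountable, whereas minimal self-joinings force $C(T)=\{T^n:n\in\Z\}$. So your second case would have to exclude every rigid example, and it does not.

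Take $|E|=\tfrac12$ and $\alpha=[0;1,a_2,a_3,\dots]$ with every $a_k$ ($k\ge 2$) even, so every $q_n$ is odd, and with $a_{n+1}\ge q_n^2$. This is realizable, since the model of Section 3 allows any $0<|E|<\alpha$.
\begin{enumerate}
\item The points $R_\alpha^jx$, $0\le j<q_n$, lie within $1/q_{n+1}$ of a translate of $\frac1{q_n}\Z$. Hence $S_{q_n}r$ equals $c_n:=\frac{3q_n-1}{2}$ on a set $A_n$ and $c_n+1$ on a set $B_n$. Each is a union of at most $q_n$ intervals of length $\approx\frac1{2q_n}$, and $\mathrm{Leb}(A_n),\mathrm{Leb}(B_n)\to\frac12$.
\item Consequently $S_{mq_n}r=mS_{q_n}r$ off a set of measure $O(mq_n/q_{n+1})$. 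Also $S_{q_n+q_{n-1}}r$ is asymptotically a sum of two independent fair two-valued variables. None of your concentration conditions holds.
\item Yet $T$ is rigid. Put $h_n:=c_n(c_n+1)$. For most $x\in A_n$ one has $S_{(c_n+1)q_n}r(x)=h_n$, and for most $x\in B_n$ one has $S_{c_nq_n}r(x)=h_n$. Meanwhile $(c_n+1)\|q_n\alpha\|\le 2/q_n^2\ll 1/q_n$, so $T^{h_n}\to Id$ in measure.
\end{enumerate}
Here the rigidity comes from base return times that depend on $x$ and are unbounded multiples of $q_n$, which your criterion cannot see. In particular, ``no concentration'' is not a boundedness condition on the partial quotients, contrary to your last paragraph.

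\textbf{The step to minimal self-joinings is only asserted.} Even on genuinely non-rigid parameters, two-valuedness of $S_{q_n}r$ does not give the drift you need. The law of $S_{q_n}r$ says nothing about where the discontinuities of $S_Nr$ sit over the whole range of $N$ a Ratner-type argument uses. So a uniform $\pm1$ drift for nearby points does not follow; it depends on Diophantine properties of $\alpha$ and of the endpoints of $E$.

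Moreover, a (switchable or weak) Ratner property only shows that an ergodic self-joining other than $\mu\otimes\mu$ is a finite extension of the marginals. To reach $\eta=(Id\times T^k)_*\mu$ you still must prove $C(T)=\{T^k\}$ and exclude nontrivial finite extensions. Saying these are ``derived along the way'' is not an argument, and these steps are exactly the content of \cite{FHZ}.

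With the tools of this note (Denjoy--Koksma and Theorem \ref{thm: operator_limits}) you reach at most part of the rigidity half. For the statement itself you must, as the paper does, invoke \cite{FHZ} together with \cite{Bernazzani}.
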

\begin{remark}
    In fact, in \cite{FHZ}, the authors proved the above dichotomy under the assumption that the IET $T$ satisfies the \emph{Keane condition} (see \cite{Keane75} for definition). However, Bernazzani in \cite{Bernazzani} proved, that if this condition is not satisfied, then the 3-IET under consideration is a finite constant suspension (,,a tower'') of a rotation, which is again, a rigid transformation.
\end{remark}

\subsection{Rigid 3-IET} Assume first that $T$ is a rigid 3-IET. Let us recall that a probability preserving system $(X,\mu,S)$ is \emph{mildly mixing} if for every measurable set $A\subset X$ with $\mu(A)>0$ we have
\[
\liminf_{n\to\infty}\mu(A\triangle T^nA)>0.
\]
or, equivalently
\[
S\text{ is disjoint from every rigid transformation}.
\]
In particular, every lightly mixing system is mildly mixing. Then, if $T$ is rigid, it is disjoint from every mildly mixing system and thus also from every lightly mixing system. This finishes the proof of Theorem \ref{thm:main2} in the case when $T$ is rigid.

\subsection{3-IET with minimal self-joinings.} We now consider the harder case, when the 3-IET $T$ has minimal self-joinings. For this purpose, let us recall the following theorem by Boshernitzan and Chaika.
\begin{theorem}[Theorem 12.1 in \cite{BoshernitzanChaika}]\label{thm:BC}
    No 3-IET is topologically mixing.
\end{theorem}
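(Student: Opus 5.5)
The statement is a topological one, but the plan is to derive it from the same integral-automorphism structure used in the proof of Theorem \ref{thm: main}. Topological mixing of $T$ would mean: for all nonempty open intervals $U,V\subset[0,1)$ we have $U\cap T^{n}V\neq\emptyset$ for all sufficiently large $n$. I will instead exhibit $U,V$ and an increasing sequence $m_n$ with $T^{m_n}U\cap V=\emptyset$ for every $n$.

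\textbf{Rational and degenerate cases.} First reduce via the representation from Section 3: $T$ is isomorphic to $R_\alpha^r$ with $r=1+\chi_E$, the isomorphism being a piecewise isometry between $[0,1)$ and the two-level tower over $J$.
\begin{itemize}
\item If $\alpha$ is rational, every point of $T$ is periodic with a common period $N$, as shown in Section 4.1. Then $T^{mN}U=U$ for all $m$, so any $V$ disjoint from $U$ works.
\item The non-Keane case is a tower over a rotation, by Bernazzani's result quoted in the remark after Theorem \ref{thm:FHZ}. It is handled the same way, or by the irrational argument below.
\end{itemize}

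\textbf{Irrational case.} Assume $\alpha\notin\mathbb Q$ and choose $U$ to be a small open interval inside the base $J\times\{0\}$. Choose it away from the endpoints of $J$ and away from the endpoints of $E$, so that the circle metric and the interval metric agree near $U$. By \eqref{eq: rotationrigidity} and the Denjoy--Koksma bound from Section 4.2, for every $x\in J$ we have
\[
(R_\alpha^r)^{S_{q_n}r(x)}(x,0)=(R_\alpha^{q_n}x,0),\qquad S_{q_n}r(x)=h_n+k_n(x),\quad |k_n(x)|\le 3,
\]
where $h_n=\lfloor q_n(1+|E|)\rfloor$ and $S_{q_n}r=\sum_{j=0}^{q_n-1}r\circ R_\alpha^j$. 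Hence $T^{h_n}x=T^{-k_n(x)}y$ for some point $y$ with $|y-x|\le\|q_n\alpha\|$. Let $U'$ be the $\varepsilon$-neighbourhood of $U$. For $n$ large enough that $\|q_n\alpha\|<\varepsilon$, this gives
\[
T^{h_n}U\subset \bigcup_{|k|\le 3}T^{k}U'.
\]
The right-hand side is a finite union of intervals of total length at most $7|U'|$, which is smaller than $1$ once $U$ and $\varepsilon$ are small. So it misses some nonempty open interval $V$. Then $T^{h_n}U\cap V=\emptyset$ for all large $n$, which contradicts topological mixing applied to the pair $(V,U)$.

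\textbf{Main obstacle.} The delicate step is making the closeness statement genuinely uniform and topological. Each $T^{k}U'$ must be a finite union of intervals, which holds because $T^k$ is a piecewise translation with finitely many discontinuities. The identification of the tower point $(R_\alpha^{q_n}x,0)$ with a point of $[0,1)$ close to $x$ must not cross a discontinuity of the isomorphism; this is exactly why $U$ is placed in the interior of the base, away from $\partial J$ and $\partial E$. The first check I would make is that the index $k_n(x)$ does not matter for the inclusion, since the union over all $|k|\le 3$ is taken at once.
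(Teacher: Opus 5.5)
Your argument is correct. It differs from the paper only in that the paper gives no proof of this statement: it quotes it from Boshernitzan--Chaika and uses it only to obtain Corollary~\ref{cor:nolm}.

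You instead derive it from the paper's own Section~4 machinery. On the base $J$, the first-return identity $T^{S_{q_n}r(x)}x=R^{q_n}x$ together with the uniform Denjoy--Koksma bound $|S_{q_n}r(x)-h_n|\le 3$ gives $T^{h_n}U\subset\bigcup_{|k|\le 3}T^{k}U'$. The right-hand side is a finite union of intervals of total length at most $7|U'|$, so some interval $V$ misses $T^{h_n}U$ for all large $n$.

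This makes the proof self-contained, and it gives slightly more than is used downstream. Since $V\cap T^{h_n}U=\emptyset$ with $h_n\to\infty$, you get $Leb(V\cap T^{h_n}U)=0$, which is Corollary~\ref{cor:nolm} directly. So the proof of Theorem~\ref{thm:main2} could bypass topological mixing and the external citation altogether.

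Three small remarks:
\begin{enumerate}
\item The representation $R_\alpha^{1+\chi_E}$ is set up in Section~3 only for the symmetric permutation. The other irreducible 3-IETs are rotations and should be mentioned explicitly: they are the case $E=\emptyset$, or you can use $T^{q_n}U\subset U'$ directly.
\item The Bernazzani bullet is superfluous. Neither the tower representation nor Denjoy--Koksma uses the Keane condition, so your rational/irrational split already covers every symmetric 3-IET.
\item The ``main obstacle'' you flag is not really there. The base is embedded in $[0,1)$ by the identity, so the only care needed is to keep $U'$ at positive distance from the endpoints of $J$, where circle and interval distances differ. The endpoints of $E$ play no role.
\end{enumerate}
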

We have the following simple consequence.
\begin{corollary}\label{cor:nolm}
    No 3-IET is lightly mixing.
\end{corollary}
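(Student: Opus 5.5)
The plan is to show that light mixing of an IET forces topological mixing, and then invoke Theorem \ref{thm:BC}. Suppose, for contradiction, that a 3-IET $T$ on $[0,1)$ with Lebesgue measure $\mu$ is lightly mixing. To establish topological mixing of $T$ we need: for every pair of nonempty open sets $U,V\subset[0,1)$ there is $N$ such that $U\cap T^nV\neq\emptyset$ for all $n\ge N$.

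The key observation is that every nonempty open subset of $[0,1)$ contains an interval and therefore has positive Lebesgue measure. Take $A:=U$ and $B:=V$. Light mixing gives
\[
\liminf_{n\to\infty}\mu(U\cap T^nV)>0,
\]
so $\mu(U\cap T^nV)>0$ for all $n$ large enough, and in particular $U\cap T^nV\neq\emptyset$ for all such $n$. This is exactly topological mixing of $T$, which contradicts Theorem \ref{thm:BC}. Hence no 3-IET is lightly mixing.

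The only step needing care, and the one I expect to be the main obstacle, is matching the topological framework used in \cite{BoshernitzanChaika}. An IET is discontinuous, so topological mixing there is understood for $T$ acting on $[0,1)$ with the usual topology, or possibly on an associated compact model such as the Keane/symbolic coding space. In the first case the argument above applies directly. In the second case I would use the fact that cylinder sets of the symbolic model correspond, up to finitely many points, to finite unions of intervals of positive length in $[0,1)$. The coding map is a measure isomorphism sending Lebesgue measure to a fully supported invariant measure, so every nonempty open set in the model still has positive measure, and the same liminf argument goes through. The positive-measure property of open sets is the one fact to verify in whichever model is used. The rest is immediate.
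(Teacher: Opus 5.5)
Your argument is correct and is essentially the paper's proof in contrapositive form: the paper takes open sets $A,B\subset[0,1)$ and times $n_k$ with $A\cap T^{n_k}B=\emptyset$ from Theorem \ref{thm:BC}, and concludes $\liminf_{n\to\infty}Leb(A\cap T^nB)=0$, implicitly using the same fact you isolate (nonempty open sets have positive Lebesgue measure). The paper works with open sets in $[0,1)$ with the usual topology, so your first case is the relevant one and the symbolic-model discussion is not needed.
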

\begin{proof}
    Let $T$ be a 3-IET. By Theorem \ref{thm:BC}, there exist two open sets $A,B\in [0,1)$ and an increasing sequence $\{n_k\}_{k\in\N}$ such that
    \[
    A\cap T^{n_k}B=\emptyset.
    \]
    Thus
    \[
    \liminf_{n\to\infty}Leb(A\cap T^{n}B)=0,
    \]
which shows that $T$ is not lightly mixing.
\end{proof}
\begin{remark}
    Note that, since we assume that $T$ has minimal self-joinings, it has no non-trival factors. In particular, in view of Corollary \cite{cor:nolm}, it has no lightly mixing factors. However, this is a weaker property than being disjoint from a lightly mixing system.
\end{remark}
\begin{comment}{To prove that $T$ is disjoint from all lightly mixing systems, we will use the following criterion by Lema\'nczyk, Parreau and Thouvenot. 

\begin{theorem}[Theorem 4 in \cite{LPT}]\label{thm:LPT}
If $(X,\mathcal B,\mu,T)$ and $(Y,\mathcal C,\nu,S)$ are two ergodic probability preserving systems, then there exists an ergodic joining $\eta\in\mathcal J_{\infty}(T)$ such that $(Y,\nu,S)$ and $(X^{\times\infty},\eta,T^{\times\infty})$ have a nontrivial common factor.
\end{theorem}}\end{comment}

Let now $(Y,\nu,S)$ be a lightly mixing system. Assume by contradiction that $S$ and $T$ are not disjoint. Then, if we can apply Theorem \ref{thm:dJR}, there exists $k\in\N$ such that $T^{\odot k}$ is a factor of $S$. Since a factor of a lightly mixing is also lightly mixing but in view of Corollary \ref{cor:nolm} and Lemma \ref{lem:notlmsymmetric}, the map $T^{\odot k}$ is not lightly mixing, we get a contradiction.

Thus, it remains to show that we can use Theorem \ref{thm:dJR}, i.e. $T$ satisfies its assumptions. First, since $T$ has minimal self-joinings, it has no non-trival factors (in particular it is weakly mixing, since otherwise it would have rotation as a factor), it has a trivial centralizer, and it is 2-simple. Moreover, in view of Theorem \ref{thm: main}, $T$ has no Lebesgue component in the spectrum. Thus, by Theorem \ref{thm:GHR}, $T$ is simple of all orders. This shows that we can apply Theorem \ref{thm:dJR} and finishes the proof of Theorem \ref{thm:main2}.

\section*{AI acknowledgement} A generative LLM was used for proofreading and literature
searching. The initial version of this article and all main mathematical contributions are from the author.

\end{document}